\theoremstyle{plain} 
\newtheorem{theorem}{Theorem}
\newtheorem*{theorem*}{Theorem}
\newtheorem*{lemma*}{Lemma}
\newtheorem{corollary}[theorem]{Corollary}
\newtheorem*{corollary*}{Corollary}
\newtheorem*{proposition*}{Proposition}
\newtheorem*{definition*}{Definition}
\newtheorem*{conjecture*}{Conjecture}
\newtheorem*{example*}{Example}
\newtheorem*{remark*}{Remark}
\definecolor{darkred}{rgb}{0.9,0,0.3}
\definecolor{darkblue}{rgb}{0,0.3,0.9}
\def\comment#1{\ifthenelse{\isodd{\value{page}}}{\marginpar{\raggedright\scriptsize{\textcolor{darkred}{#1}}}}{\marginpar{\raggedleft\scriptsize{\textcolor{darkred}{#1}}}}}  
\newcommand{\He}{\text{Hess}}
\newcommand{\R}{\mathbb{R}}
\newcommand{\id}{\mspace{2mu}\mathrm{i}\mspace{-0.6mu}\mathrm{d}} 
\renewcommand{\leq}{\leqslant}
\renewcommand{\geq}{\geqslant}
\renewcommand{\epsilon}{\varepsilon}
\newcommand{\pB}[1]{\Bigl({#1}\Bigr)}
\newcommand{\pbb}[1]{\biggl({#1}\biggr)}
\newcommand{\nnb}{\nonumber\\}
\DeclareMathOperator{\var}{var}
\DeclareMathOperator{\cov}{cov}
\DeclareMathOperator{\ent}{ent}
\title{A very simple proof of the LSI for high temperature spin systems}
\begin{document}
\date{January 31, 2018}
\author{Roland Bauerschmidt\footnote{University of Cambridge, Statistical Laboratory, DPMMS. E-mail: {\tt rb812@cam.ac.uk}.} \and
  \and Thierry Bodineau\thanks{CMAP \'Ecole Polytechnique, CNRS, Universit\'e Paris-Saclay. E-mail: {\tt thierry.bodineau@polytechnique.edu}.}}
\maketitle
\begin{abstract}
  We present a very simple proof that 
  the $O(n)$ model satisfies a uniform logarithmic Sobolev inequality (LSI)
  if the positive definite coupling matrix has largest eigenvalue less than $n$.
  This condition applies in particular to the SK spin glass model at inverse temperature $\beta < 1/4$.
  It is the first result of rapid relaxation for the SK model and requires significant cancellations
  between the ferromagnetic and anti-ferromagnetic spin couplings that cannot be obtained
  by existing methods to prove Log-Sobolev inequalities.
  The proof also applies to more general bounded and unbounded spin systems.
  It uses a single step of zero range renormalisation
  and Bakry--Emery theory for the renormalised measure.
\end{abstract}

\section{Main result and proof}

To be concrete, consider the $O(n)$ model, though our method applies more generally.
Let $\Lambda$ be a finite set and $(M_{xy})_{x,y\in\Lambda}$ be a symmetric positive definite matrix.
The $O(n)$ model is the measure
\begin{equation}
  \nu(d\sigma)
  =
  \frac{1}{Z} 
  e^{-\frac12 (\sigma,M\sigma)}
  \prod_{x\in \Lambda} \mu(d\sigma_x),
  \quad
  (\sigma, M\sigma) = \sum_{x,y\in\Lambda} M_{xy} \, \sigma_x\cdot \sigma_y,
\end{equation}
where $\mu$ is the surface measure on $S^{n-1} \subset \R^n$.
The model is ferromagnetic if $M_{xy} \leq 0$ for $x \neq y$, but we do not require it.
For the $O(n)$ model, the diagonal entries of $M$ do not affect the measure,
so it is not a restriction to assume that $M$ is positive definite.
Denote by $\|M\|$ its maximal eigenvalue.

For any measure $m$, the entropy is $\ent_m(F) = m(F \log F) - m(F)\log m(F)$.
For $F: S^{n-1}\to \R$ in $C^1$, write $|\nabla F|^2$
for the square with respect to the metric of the covariant
derivative on $S^{n-1}$ if $n>1$ and $|\nabla F|^2 = |F(\sigma)-F(-\sigma)|^2$ if $n=1$.
Let $\mu^h(d\sigma) \propto e^{h\cdot \sigma} \mu(d\sigma)$
be the normalised single-spin measure
with external field $h \in \R^n$. Then $\mu^h$ satisfies
a LSI with constant $\gamma$ independent of $h$:
\begin{equation} \label{e:LSI-single}
  \ent_{\mu^h}(F^2) \leq \frac{2}{\gamma} \mu^h(|\nabla F|^2).
\end{equation}
For the Ising model ($n=1$),
the measure $\mu^h$ is Ber$(p)$ on $\{\pm 1\}$ with $p=\mu^h(1)$ and \eqref{e:LSI-single}
holds with $2/\gamma= 1/2 \geq pq(\log p-\log q)/(p-q)$ for any $p=1-q\in[0,1]$; see \cite{MR1849347,MR1490046,MR1845806}.
For $n > 2$, see \cite{MR2854732}; the  proof there can be adapted to $n=2$ (and actually becomes simpler).

\begin{theorem} \label{thm:LSI-On}
  Assume $M$ is positive definite and $\|M\| < n$.
  Then $\nu$ satisfies a LSI uniformly with respect to the set $\Lambda$:
  \begin{equation} 
  \label{e:LSI-On}
    \ent_\nu(F^2) \leq \frac{2}{\gamma}
    \left(1+\frac{2 n\|M\|}{n-\|M\|}\right)
    \sum_{x\in\Lambda} \nu(|\nabla_{\sigma_x} F|^2) .
  \end{equation}
\end{theorem}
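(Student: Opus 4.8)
The plan is to decouple the quadratic interaction by a single Hubbard--Stratonovich (``zero range renormalisation'') step, run Bakry--Emery on the resulting on-site measure in an auxiliary field, and transfer the bound back to $\nu$ through the entropy chain rule. Since each $\sigma_x\in S^{n-1}$ has $|\sigma_x|^2=1$, the quantity $(\sigma,\sigma)$ is constant, so for any $c$ we may rewrite $e^{-\frac12(\sigma,M\sigma)}\propto e^{\frac12(\sigma,(cI-M)\sigma)}$ up to a factor. Choosing $\|M\|<c<n$ makes $J\deq cI-M$ positive definite, and the Gaussian identity $e^{\frac12(\sigma,J\sigma)}\propto\int e^{-\frac12(\phi,J^{-1}\phi)+(\phi,\sigma)}\,d\phi$ realises $\nu$ as the $\sigma$-marginal of a joint measure $\bar\nu(d\sigma,d\phi)$ under which, conditionally on $\phi$, the spins are independent with $\sigma_x\sim\mu^{\phi_x}$, while the $\phi$-marginal is the renormalised measure $\wt\nu(d\phi)\propto e^{-U(\phi)}\,d\phi$ with $U(\phi)=\tfrac12(\phi,J^{-1}\phi)-\sum_x g(\phi_x)$ and $g(\phi_x)=\log\int_{S^{n-1}}e^{\phi_x\cdot\sigma}\,\mu(d\sigma)$ the single-spin log-Laplace transform. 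The non-Gaussian part of $U$ is purely on-site: this is the ``zero range'' structure.

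First I would split the entropy. Since $F$ depends only on $\sigma$, $\ent_\nu(F^2)=\ent_{\bar\nu}(F^2)$, and the chain rule gives
\[
  \ent_{\bar\nu}(F^2)=\wt\nu\bigl(\ent_{\sigma\mid\phi}(F^2)\bigr)+\ent_{\wt\nu}(G),\qquad G(\phi)\deq\E[F^2\mid\phi],
\]
where $\ent_{\sigma\mid\phi}$ is the entropy of the conditional law $\prod_x\mu^{\phi_x}$. As this law is a product, tensorisation of the single-spin LSI \eqref{e:LSI-single} yields $\ent_{\sigma\mid\phi}(F^2)\le\frac2\gamma\sum_x\E[|\nabla_{\sigma_x}F|^2\mid\phi]$, and averaging over $\wt\nu$ produces $\frac2\gamma\sum_x\nu(|\nabla_{\sigma_x}F|^2)$ --- the ``$1$'' inside the bracket of \eqref{e:LSI-On}.

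For the second term I would apply Bakry--Emery to $\wt\nu$. Since $g''$ is the single-spin covariance, $\nabla^2U=J^{-1}-\diag\bigl(\cov_{\mu^{\phi_x}}(\sigma_x)\bigr)$. Because $M$ is positive definite, $J=cI-M\le cI$, so $J^{-1}\ge c^{-1}I$; and the single spin obeys the dimensional bound $\cov_{\mu^h}(\sigma)\le\frac1n I$ uniformly in $h$ (with equality at $h=0$). Hence $\nabla^2U\ge(\frac1c-\frac1n)I=\kappa I$ with $\kappa=\frac{n-c}{cn}>0$ precisely because $c<n$, and Bakry--Emery gives $\ent_{\wt\nu}(G)\le\frac2\kappa\,\wt\nu(|\nabla_\phi\sqrt G|^2)$. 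It remains to dominate the field Dirichlet form of $\sqrt G$ by the intrinsic one. Writing $\nabla_{\phi_x}G=\cov_{\mu^{\phi_x}}(\sigma_x,\Phi_x)$, with $\Phi_x(\sigma_x)$ the partial average of $F^2$ over the other spins (which is $\phi_x$-independent), the crux is the single-spin transfer estimate
\[
  \bigl|\nabla_h\sqrt{\mu^h(\psi^2)}\bigr|^2\le\tfrac2\gamma\,\mu^h(|\nabla\psi|^2),
\]
a second avatar of \eqref{e:LSI-single} relating the field-gradient of $\sqrt{\mu^h(\psi^2)}$ to the intrinsic sphere Dirichlet form. Applied with $\psi^2=\Phi_x$ and combined with the pointwise bound $|\nabla_{\sigma_x}\sqrt{\Phi_x}|^2\le\E[|\nabla_{\sigma_x}F|^2\mid\phi,\sigma_x]$ (Cauchy--Schwarz under the partial average), it gives $\wt\nu(|\nabla_\phi\sqrt G|^2)\le\frac2\gamma\sum_x\nu(|\nabla_{\sigma_x}F|^2)$. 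Collecting the two terms, $\ent_\nu(F^2)\le\frac2\gamma\bigl(1+\frac2\kappa\bigr)\sum_x\nu(|\nabla_{\sigma_x}F|^2)=\frac2\gamma\bigl(1+\frac{2cn}{n-c}\bigr)\sum_x\nu(|\nabla_{\sigma_x}F|^2)$, and letting $c\downarrow\|M\|$ reproduces exactly \eqref{e:LSI-On}.

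The main obstacle is the last single-spin estimate: controlling the field-derivative of the conditional second moment by the intrinsic sphere Dirichlet form, uniformly in $h$, with the sharp constant $\tfrac2\gamma$ needed to match the prefactor of \eqref{e:LSI-On}. The dimensional covariance bound $\cov_{\mu^h}(\sigma)\le\frac1n I$ and the Bakry--Emery computation are comparatively routine; by contrast the transfer estimate is what genuinely couples the two scales, and I expect it to require the semigroup (or Helffer--Sjöstrand) representation of the covariance $\cov_{\mu^h}(\sigma,\psi^2)$ together with \eqref{e:LSI-single}, which is also where the precise constant is pinned down.
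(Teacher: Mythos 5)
Your construction is the paper's own proof written in different coordinates. The paper decomposes $M^{-1}=c^{-1}\id+B^{-1}$ and represents $e^{-(\sigma,M\sigma)/2}$ as the Gaussian convolution \eqref{e:Gauss}; after the substitution $\phi_x=c\varphi_x$ this is exactly your Hubbard--Stratonovich field with $J=c\,\id-M$, your curvature bound $\kappa=\tfrac1c-\tfrac1n$ is the paper's $\lambda=c-c^2/n$ rescaled by $c^{-2}$, and both parametrisations produce the identical constant $\tfrac2\gamma\bigl(1+\tfrac{2cn}{n-c}\bigr)$ before letting $c\downarrow\|M\|$. The entropy chain rule, the tensorisation of \eqref{e:LSI-single}, the single-spin covariance bound $\var_{\mu^h}(x\cdot\sigma)\le 1/n$ feeding Bakry--Emery (the paper's \eqref{e:cov}), and the reduction of the field Dirichlet form to a single-site covariance are all steps of the paper's argument, with matching constants. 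All of this is correct.

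The one genuine gap is the transfer estimate $\bigl|\nabla_h\sqrt{\mu^h(\psi^2)}\bigr|^2\le\tfrac2\gamma\,\mu^h(|\nabla\psi|^2)$, which you state with the right constant but do not prove, and which you predict will require semigroup or Helffer--Sj\"ostrand representations of the covariance. It does not: this is precisely the paper's \eqref{e:last}, proved in three elementary lines (the computation \eqref{e:F2}). Since $\nabla_h\mu^h(\psi^2)=\cov_{\mu^h}(\psi^2,\sigma)$, duplicate the spin and symmetrise,
\begin{equation*}
  \cov_{\mu^h}(\psi^2,\sigma)
  = \tfrac12\,(\mu^h\otimes\mu^h)\bigl((\psi(\sigma)-\psi(\sigma'))(\psi(\sigma)+\psi(\sigma'))(\sigma-\sigma')\bigr);
\end{equation*}
then Cauchy--Schwarz together with the boundedness $|\sigma-\sigma'|\le 2$ gives $|\cov_{\mu^h}(\psi^2,\sigma)|^2\le 8\var_{\mu^h}(\psi)\,\mu^h(\psi^2)$, and dividing by $4\mu^h(\psi^2)$ and applying the spectral gap $\gamma$ (implied by the LSI \eqref{e:LSI-single}) yields exactly your inequality, constant included. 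Note that this step, not the Hessian computation, is where the compactness of the spins enters (via $|\sigma-\sigma'|\le 2$); for unbounded spins the paper's Section~\ref{sec:remarks}(iii) replaces it by \cite[Proposition~2.2]{MR1837286}. With this lemma inserted your argument is complete and coincides, step for step, with the paper's proof.
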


General background on LSI and their consequences is summarised in
\cite{MR1767995,MR1837286,MR1971582,MR1490046,MR1746301,MR1845806}.
Several proofs of versions of the LSI are known
\cite{MR1156671,MR1152374,MR1153990,MR1182416,MR893137,MR1233852,MR1746301,MR1269388}.
Our main  
novelties are, firstly, the method of smoothing in the field instead of spatial decoupling 
and the resulting very simple proof, and, secondly, 
our spectral condition on $M$.
Note that, since the spins have norm $1$, adding multiple of the the identity matrix to $M$ does not change the measure $\nu$.
Thus if $M$ is not positive definite but has spectrum in $[\lambda^-,\lambda^+]$, 
then it is equivalent to apply the theorem with the coupling 
matrix $M$ replaced by $M-\lambda^- \id$ so that
the LSI holds under the spectral condition 
$\|M-\lambda^- \id\| = \lambda^+ - \lambda^- < n$.
This condition is effective even when significant cancellations between ferromagnetic
and anti-ferromagnetic couplings are required, and it thus applies to situations covered by none of the
existing methods for Log-Sobolev inequalities.

Most importantly, new applications include the Sherrington--Kirkpatrik (SK) spin glass model \cite{MR2731561}.
The coupling matrix of the SK model is $M=\beta H$ with  $\beta>0$ and $H$ a $N\times N$
GOE matrix consisting of independent Gaussian entries with variance $1/N$ above the diagonal.
Our theorem implies the following corollary.

\begin{corollary}
Let $\Gamma_N(M)$ be the Log-Sobolev constant associated with the quenched SK
measure on $N$ sites with coupling matrix $M=\beta H$.
Then the SK model with $\beta<1/4$ satisfies a uniform LSI in the following sense:
there is $c_\beta < \infty$ such that 
\begin{equation}
\lim_{N \to \infty} \mathbb{P}_N \big(  \Gamma_N(M) < c_\beta \big) = 1,
\end{equation}
where $\mathbb{P}_N$ stands for the GOE distribution of the coupling matrix.
\end{corollary}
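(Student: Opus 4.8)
The plan is to read the corollary as the Ising ($n=1$) instance of Theorem~\ref{thm:LSI-On}, combined with the spectral-shift remark following it and the classical convergence of the extreme eigenvalues of the GOE. Since the SK spins take values in $\{\pm 1\}$, the single-spin LSI constant is the universal value with $2/\gamma = 1/2$. The coupling matrix $M = \beta H$ is symmetric but not positive definite; as observed after the theorem, adding a multiple of $\id$ to $M$ leaves $\nu$ unchanged, so it suffices that the spectral width $\lambda^+ - \lambda^-$ of $M$ be strictly less than $n = 1$, in which case the LSI holds with constant at most $\tfrac{2}{\gamma}\bigl(1 + \tfrac{2(\lambda^+-\lambda^-)}{1-(\lambda^+-\lambda^-)}\bigr)$.

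First I would estimate the spectral width. Denoting by $\lambda_{\max}(H)$ and $\lambda_{\min}(H)$ the extreme eigenvalues of the GOE matrix $H$, the width of $M = \beta H$ equals $\beta\,(\lambda_{\max}(H) - \lambda_{\min}(H))$. By the Bai--Yin theorem --- or any standard edge result for Wigner matrices --- one has $\lambda_{\max}(H) \to 2$ and $\lambda_{\min}(H) \to -2$ in probability as $N \to \infty$ (the two being equivalent by the symmetry of the GOE spectrum). Hence the spectral width of $M$ converges in probability to $4\beta$.

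To conclude, fix $\beta < 1/4$ and pick $\epsilon > 0$ with $4\beta + \epsilon < 1$. The event $\{\lambda^+ - \lambda^- \leq 4\beta + \epsilon\}$ has probability tending to $1$, and on it Theorem~\ref{thm:LSI-On}, applied to the shifted matrix, gives $\Gamma_N(M) \leq \tfrac{2}{\gamma}\bigl(1 + \tfrac{2(4\beta+\epsilon)}{1-(4\beta+\epsilon)}\bigr) =: c_\beta < \infty$, whence $\mathbb{P}_N(\Gamma_N(M) < c_\beta) \to 1$. I do not expect any serious obstacle: the entire analytic content sits in Theorem~\ref{thm:LSI-On}, and the random-matrix input is only a one-sided control of the extreme eigenvalues, for which the classical convergence suffices and no Tracy--Widom fluctuation analysis is needed. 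The threshold $\beta = 1/4$ arises simply because the support $[-2,2]$ of the semicircle law has width $4$, so the rescaled width $4\beta$ meets the critical value $n = 1$ exactly at $\beta = 1/4$.
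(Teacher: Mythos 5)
Your proposal is correct and follows essentially the same route as the paper's own proof: apply the spectral-shift observation after Theorem~\ref{thm:LSI-On} so that only the spectral width $\lambda^+-\lambda^-$ of $M=\beta H$ matters, and use the classical convergence of the extreme GOE eigenvalues to $\pm 2$ to conclude that this width concentrates at $4\beta < 1 = n$. The only difference is that you spell out the explicit constant $c_\beta$ and the $\epsilon$-argument, which the paper leaves implicit.
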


\begin{proof}
Results on the concentration of the extreme eigenvalues of the GOE
imply that the distance $\lambda^+-\lambda^-$ between the smallest and largest eigenvalue of $H$ is concentrated at $4$   (see, e.g., \cite{MR2760897}). 
Thus the spectral condition implies the validity of the LSI with probability going to 1 as soon as
$4\beta < n=1$.
\end{proof}

For recent results on the dynamics of low temperature and spherical spin
glasses, see \cite{1608.06609,1705.04243} and references.
An exponentially large upper bound on the mixing time of the SK model
was given in \cite{MR1799876}.

For direct extensions of our proof to more general bounded and unbounded single-spin measures
satisfying a LSI uniformly in an external field, see
Section~\ref{sec:remarks}.

\begin{proof}
By possibly replacing the coupling $M$
by $M + \delta \id$ with $0< \delta < n - \|M\|$, we can assume that $0<M<c \id$ as quadratic forms on $\R^\Lambda$ with $c<n$.
Thus there is a positive definite matrix $B$ such that $M^{-1} = c^{-1} \id + B^{-1}$ and 
the Gaussian measure with covariance $M^{-1}$ can be represented as the  convolution of two Gaussian measures with covariance $c^{-1} \id $ and $B^{-1}$,
\begin{equation} 
\label{e:Gauss}
  e^{-(\sigma,M\sigma)/2}
  =
  C \int_{\R^{n\Lambda}} e^{-c(\varphi-\sigma,\varphi-\sigma)/2} e^{-(\varphi,B\varphi)/2} \, d\varphi
  \quad
  \text{for $\sigma \in \R^{n\Lambda}$.}
\end{equation}
For $\psi \in \R^n$, define the renormalised single-spin potential $V(\psi)$ and the probability measure $\mu_\psi$ by
\begin{equation} \label{e:mupsi}
  V(\psi) = -\log \int e^{-c(\psi-\sigma)^2/2} \, \mu(d\sigma),
  \qquad
  \mu_{\psi}(d\sigma) = e^{V(\psi)} e^{-c(\psi-\sigma)^2/2} \, \mu(d\sigma).
\end{equation}
Using that $|\sigma|=1$, notice that $\mu_\psi = \mu^h$ with $h=c\psi$ and where $\mu^h$ was defined above \eqref{e:LSI-single}.
For any $x\in \R^n$ with $|x|=1$ and any $h \in \R^n$, one has the bound $\var_{\mu^h}(x\cdot \sigma) \leq 1/n$.
This bound is trivial for $n=1$ and proved in \cite[Theorem~D.2]{MR0496246} for $n>1$
(alternatively one can assume $\|M\|<1$ and use the trivial bound $1$ for the variance also for $n>1$).
Thus
\begin{equation} \label{e:cov}
  x\cdot \He V(\psi)x
  = c|x|^2 - c^2 \var_{\mu_\psi}(x\cdot \sigma)
  \geq \lambda |x|^2, \quad \text{where $\lambda = c-c^2/n > 0$}
  .
\end{equation}
By the Bakry--Emery criterion \cite{MR889476}, it follows that the renormalised measure $\nu_r$ on $\R^{n\Lambda}$,
defined by
\begin{equation} \label{e:nur}
  \nu_r(d\varphi) = \frac{1}{Z_r} e^{-(\varphi,B\varphi)/2 - \sum_x V(\varphi_x)} d\varphi
  ,
\end{equation}
satisfies a LSI with constant $\lambda>0$.
For any $\psi \in \R^n$, the measure $\mu_\psi$ satisfies a LSI with constant $\gamma$
by~\eqref{e:LSI-single}. 
For any $\varphi \in \R^{n\Lambda}$, define 
$\mu_{\varphi}(d\sigma) = \prod_{x} \mu_{\varphi_x}(d\sigma_x)$. Then
$\nu(F(\sigma)) = \nu_r(\mu_\varphi(F(\sigma))$ and
by the tensorisation principle $\mu_{\varphi}$
satisfies a LSI with the same constant $\gamma$ as $\mu_{\varphi_x}$.
Let $G(\varphi) = \mu_{\varphi}(F(\sigma)^2)^{1/2}$. Using the LSI for $\mu_\varphi$ and $\nu_r$,
\begin{equation}
  \ent_\nu(F^2)
  = \nu_r(\ent_{\mu_{\varphi}}(F(\sigma)^2)) + \ent_{\nu_r}(G(\varphi)^2)
  \leq
  \frac{2}{\gamma} \sum_{x\in\Lambda} \nu(|\nabla_{\sigma_x}F(\sigma)|^2)
  + \frac{2}{\lambda} \sum_{x\in\Lambda} \nu_r(|\partial_{\varphi_x} G(\varphi)|^2)
  ,
\end{equation}
where $\partial_{\varphi_x}$ denotes the gradient in $\R^n$.
The following inequality completes the proof of \eqref{e:LSI-On}:
\begin{equation} \label{e:last}
  \nu_r(|\partial_{\varphi_x} G(\varphi)|^2)
  \leq
  \frac{2c^2}{\gamma}
  \nu_r(\mu_{\varphi}(|\nabla_{\sigma_x}F|^2))
  =
  \frac{2c^2}{\gamma}
  \nu(|\nabla_{\sigma_x}F|^2).
\end{equation}
This inequality follows from standard reasoning which goes as follows. By definition,
\begin{equation}
  \partial_{\varphi_x} G(\varphi)
  = \frac{\partial_{\varphi_x} G(\varphi)^2}{2G(\varphi)}
  = \frac{\partial_{\varphi_x} \mu_{\varphi}(F^2)}{2\mu_{\varphi}(F^2)^{1/2}}
  = \frac{c}{2} \frac{\cov_{\mu_{\varphi}}(F^2,\sigma_x)}{\mu_{\varphi}(F^2)^{1/2}}
  .
\end{equation}
By duplication of $\sigma_x$ (with the other spins fixed),
the Cauchy-Schwarz inequality, and $|\sigma_x-\sigma_x'|\leq 2$,
\begin{align} \label{e:F2}
  |\cov_{\mu_{\varphi_x}}(F(\sigma)^2,\sigma_x)|
  &= \frac12 |(\mu_{\varphi_x}\otimes\mu_{\varphi_x})\pB{(F(\sigma_x)-F(\sigma_x'))(F(\sigma_x)+F(\sigma_x'))(\sigma_x-\sigma_x')}|
  \nnb
  &\leq \pbb{\var_{\mu_{\varphi_x}}(F)}^{1/2} \pbb{ \frac12 (\mu_{\varphi_x}\otimes\mu_{\varphi_x})\pB{(F(\sigma_x)+F(\sigma_x'))^2|\sigma_x-\sigma_x'|^2}}^{1/2}
  \nnb
  &\leq \pB{\var_{\mu_{\varphi_x}}(F)}^{1/2} \pB{ 8\mu_{\varphi_x}(F(\sigma)^2)}^{1/2},
\end{align}
where the measure $\mu_{\varphi_x}$ fixes all spins except for $\sigma_x$.
Using $\mu_{\varphi}(\cdot) = \mu_{\varphi}(\mu_{\varphi_x}(\cdot))$ and independence, we have
$\cov_{\mu_{\varphi}}(F^2,\sigma_x) = \mu_{\varphi}(\cov_{\mu_{\varphi_x}}(F^2,\sigma_x))$,
so that by \eqref{e:F2} and the Cauchy--Schwarz inequality,
\begin{equation}
  |\cov_{\mu_{\varphi}}(F^2,\sigma_x)|^2
  \leq 8 \mu_\varphi(\var_{\mu_{\varphi_x}}(F)^{1/2}\mu_{\varphi_x}(F^2)^{1/2})^2
  \leq 8 \mu_\varphi(\var_{\mu_{\varphi_x}}(F)) \mu_{\varphi}(F^2)
  .
\end{equation}
By the spectral gap inequality for $\mu_{\varphi_x}$ implied by the LSI for $\mu_{\varphi_x}$,
\begin{equation}
  \mu_\varphi(\var_{\mu_{\varphi_x}}(F))
  \leq
  \frac{1}{\gamma} \mu_\varphi(\mu_{\varphi_x}(|\nabla_{\sigma_x}F|^2))
  = \frac{1}{\gamma} \mu_{\varphi}(|\nabla_{\sigma_x} F|^2)
  .
\end{equation}
In summary, we have shown
\begin{equation}
  |\partial_{\varphi_x} G(\varphi)|^2 \leq \frac{2c^2}{\gamma} \mu_{\varphi}(|\nabla_{\sigma_x}F|^2)
  .
\end{equation}
This implies \eqref{e:last} with the constant
$\frac{2}{\gamma} \left(1+\frac{2 n c}{n-c}\right)$.
The constant $c$ can be chosen as $\|M \| - \delta$ for any regularisation parameter $\delta>0$.
Thus letting $\delta>0$ tend to 0, this completes the proof.
\end{proof}

\section{Remarks and extensions}
\label{sec:remarks}

(i)
It is straightforward to adapt our proof to prove a spectral gap inequality under the assumption
that the single-spin measure satisfies a spectral gap inequality. It then becomes even simpler.

\smallskip\noindent
(ii)
Our proof applies without change to arbitrary 
(site-dependent) single-spin measures $\mu$ on $\R^n$
with support in the unit ball (and by rescaling with any bounded support)
such that the measure $\mu_\psi$ defined in \eqref{e:mupsi} satisfies a LSI with constant $\gamma$ uniformly in $\psi$.

\smallskip\noindent
(iii)
The proof can also be adapted easily to the case that $\mu$ has unbounded support.
In particular, assume that the single-spin measures $\mu$ has density proportional
to $e^{-U}$ with respect to the Lebesgue measure on $\R^n$,
with $U$ convex at infinity (perturbed convex).
By the Bakry--Emery and Holley--Stroock criteria (see \cite{MR1837286}),
$\mu_\psi$ then satisfies a LSI uniformly in the external field $\psi$.
In the proof of our theorem,
boundedness is used only in \eqref{e:cov} and \eqref{e:F2}.
In the above setting, a version of \eqref{e:cov} follows directly
from the fact that the LSI for $\mu_\varphi$ implies it has spectral gap $\gamma$
(or the variance can be estimated by hand for a possibly better estimate)
and \eqref{e:F2} can be replaced
by the generalisation \cite[Proposition~2.2]{MR1837286} due to \cite{MR1715549,MR1704666}.
Except for an additional constant in the second term in \eqref{e:LSI-On}, the conclusion is identical.

\smallskip\noindent
(iv)
For ferromagnetic spin couplings, the spectral condition on $M$ is sharp for the mean-field model.
Also, for positive definite spin coupling matrices $M$, the bound $\|M\| < n$ is
implied by the mean-field bound \cite{MR589428}:
\begin{equation} \label{e:MFcondition}
\sup_x \sum_{y \in \Lambda} |M_{xy}| < n.
\end{equation}
In general, for spin coupling matrices $M$ that are not positive definite,
the shifting of the spectrum to make them positive definite costs at most a factor two.
However, as exemplified by the 
SK model, our spectral condition does not involve an absolute value of $M$ and is much stronger
when cancellations between spin couplings are relevant as in the SK model.

\smallskip
\noindent
(v)
We view our starting point \eqref{e:Gauss} as a single step of renormalisation.
It is similar but not identical to the Hubbard--Stratonovich transform
used in \cite{BraLieAppl,MR0496246}
to obtain a bound on the two-point function of the $O(n)$ model
under a similar assumption. The Hubbard--Stratonovich transform is
\begin{equation}
  e^{+(\sigma,M\sigma)/2} = C \int_{\R^{n\Lambda}} e^{-(\varphi,\sigma) - (\varphi,M^{-1}\varphi)/2} \, d\varphi.
\end{equation}
Its utility is emphasised in \cite{MR2953867}.

\section*{Acknowledgements}

We thank
F.\ Barthe, 
G.\ Ben Arous,
N.\ Berestycki,
D.\ Brydges,
D.\ Chafa\"i,
A.\ Jagannath,
J.\ Hermon,
T.\ Spencer,
and Y.\ Spinka
for discussions and references.
 We acknowledge the support of ANR-15-CE40-0020-01 grant LSD.

\bibliography{all}
\bibliographystyle{plain}

\end{document}